\pgfplotsset{compat=1.17}
\def\NAT@def@citea{\def\@citea{\NAT@separator}}
\theoremstyle{plain}
\newtheorem{theorem}{Theorem}[section]
\newtheorem{lemma}[theorem]{Lemma}
\newtheorem{corollary}[theorem]{Corollary}
\newtheorem{proposition}[theorem]{Proposition}
\theoremstyle{definition}
\newtheorem{definition}[theorem]{Definition}
\newtheorem{example}[theorem]{Example}
\theoremstyle{remark}
\newtheorem{remark}{Remark}
\lstdefinestyle{GAPstyle}{
    language=GAP,
    basicstyle=\ttfamily\footnotesize,
    keywordstyle=\color{blue}\bfseries,
    commentstyle=\color{green!60!black},
    stringstyle=\color{red},
    numbers=left,
    numberstyle=\tiny\color{gray},
    stepnumber=1,
    numbersep=5pt,
    breaklines=true,
    frame=lines,
    backgroundcolor=\color{gray!5},
    captionpos=b,
    escapeinside={(*@}{@*)}
}
\begin{document}


\title{Generalized Latin Square Graphs of Semigroups: A Counting Framework for Regularity and Spectra}

\author{
\name{M. R. Sorouhesh\textsuperscript{a}\thanks{Email: babaxor@iau.ac.ir} , M. Golriz\textsuperscript{a}\thanks{Email: Mgolriz@iau.ac.ir} and B. Panbehkar\textsuperscript{b}\thanks{Email: Bo.Panbehkar@iau.ac.ir}}
\affil{\textsuperscript{a}Department of Mathematics, ST. C., Islamic Azad University, Tehran, Iran; \textsuperscript{b}Department of Mathematics, Dez. C., Islamic Azad University, Dezful, Iran}
}

\maketitle

\begin{abstract}
We introduce the \emph{Generalized Latin Square Graph} $\Gamma(S)$ of a finite semigroup $S$. 
Since we record global factorization multiplicities and local alternative counts, we define three counting invariants $N_S,N_R,N_C$. 
This gives that we have a simple degree formula
\[
\deg(v)=2n-3+Q(v),\qquad Q(v)=N_S(s_k)-2N_R(v)-2N_C(v).
\]
We show that $\Gamma(S)$ is regular exactly when $Q$ is constant. 
We apply the framework to cancellative semigroups, bands, Brandt semigroups and null semigroups. 
For null semigroups, since we identify $\Gamma(S)\cong K_n\times K_n$, we compute the spectrum and energy. 
A concise computational appendix lists the \texttt{GAP} driver and representative outputs.
\end{abstract}

\begin{keywords}
Semigroups, Generalized Latin square graphs, Regularity, Spectrum, Graph energy, Brandt semigroups, Bands
\end{keywords}

\section{Introduction}\label{sec:intro}
Graphs derived from \textbf{algebraic} structures serve as a bridge between algebra and \textbf{combinatorics}. Since the theory for groups is well-established \cite{Cameron2022}, semigroups present challenges due to the failure of cancellation law. This gives that structural differences influence graph-theoretic properties such as \textbf{regularity} and spectral distribution.

A \emph{Latin square} of order \(n\) is an \(n\times n\) array with entries from a set \(S\) in which each symbol appears exactly once in every row and column. The classical Latin square graph has vertex set consisting of triples \((r,c,s)\); two distinct vertices are adjacent precisely when they share exactly one coordinate \cite{Denes1974}. For a finite group, the multiplication table is a Latin square, and the associated graph is known to be regular.

In this paper, we introduce the \emph{Generalized Latin Square Graph} $\Gamma(S)$, which is a construction that encodes ordered factorization within a semigroup. Unlike the standard Cayley graphs $\mathrm{Cay}(S,A)$, which depend on the choice of a generating set $A$ \cite{Wang2013}, the graph $\Gamma(S)$ is intrinsic to the semigroup structure. The energy of a graph $G$ is defined as the sum of the absolute values of the eigenvalues of its adjacency matrix. Since we record ordered factorization \textbf{multiplicities} as vertex data, we establish a direct link between multiplication table statistics and graph invariants.

\subsection{Related Work}
Graphs defined on semigroups and groups have been studied from several perspectives. \textit{Power graphs} and their variants were introduced in works (see, e.g., \cite{Kelarev2003,Chakrabarty2009,Chattopadhyay2021}). Moreover, \textit{Cayley graphs} provide a generator-dependent viewpoint \cite{Wang2013}. In addition, commuting graphs and other constructions have been investigated for various semigroup families \cite{Paulista2025,Steinberg2016}.

Spectral investigations of power-type graphs are relevant to our discussion. The energy of a graph $G$ is defined as the sum of the absolute values of the eigenvalues of its adjacency matrix. For example, spectra of power graphs for finite groups are presented in \cite{Mehranian2017}. While those works focus on power graphs, our graph $\Gamma(S)$ is built from the full multiplication table. Since our graph encodes ordered factorization multiplicities, the two approaches are different.

Our primary tool is a set of three counting invariants ($N_S, N_R, N_C$). This framework yields an explicit degree formula 
\[
\deg(v)=2n-3+Q(v),
\]
and gives a criterion for regularity. We apply these results to cancellative semigroups, bands, Brandt semigroups, and null semigroups. For null semigroups, since we identify the graph structure as a tensor product, we compute its exact spectrum and energy.

The paper is organized as follows. Section~\ref{sec:preliminaries} fixes notation. Section~\ref{sec:generalized} defines the graph $\Gamma(S)$. Section~\ref{sec:degree} derives the degree formula. Section~\ref{sec:special_classes} applies the framework to special classes, and Section~\ref{sec:null} handles the spectral analysis for null semigroups.

\begin{figure}[H]
\centering
\begin{tikzpicture}[
    scale=0.95,
    node/.style={circle, draw=black!70, fill=gray!20, minimum size=2mm, inner sep=1pt}
]
\def\R{4.0}

\foreach \i in {1,2,3} {
    \foreach \j in {1,2,3} {
        \pgfmathtruncatemacro{\labeli}{(\i-1)*3+\j}
        \node[node] (v\i\j) at (\j*1.5, -\i*1.5) {};
        \node[above=0.1cm of v\i\j, font=\small] {($s_\i, s_\j, 0$)};
    }
}
\foreach \i in {1,2,3} {
    \foreach \j in {1,2,3} {
        \foreach \p in {1,2,3} {
            \foreach \q in {1,2,3} {
                \ifnum \i=\p \else
                    \ifnum \j=\q \else
                        \draw[gray!40, thin] (v\i\j) -- (v\p\q);
                    \fi
                \fi
            }
        }
    }
}
\node[font=\small] at (3, -5) {$\Gamma(S) \cong K_3 \times K_3$};
\end{tikzpicture}
\caption{Visualization of $\Gamma(S)$ for a null semigroup of order $n=3$.}
\label{fig:null_tensor}
\end{figure}
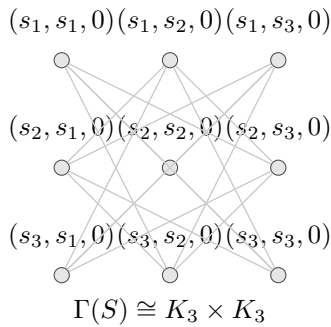
\section{Preliminaries}\label{sec:preliminaries}
All semigroups in this paper are finite. 
Let $S=\{s_1,\dots,s_n\}$ be a semigroup of order $n$. We recall definitions of specific semigroup classes studied in this paper. For general semigroup theory, the reader can consult \cite{Howie1995,Clifford1961,Steinberg2016}.

A semigroup $S$ is left (resp. right) cancellative if $sx=sy$ (resp. $xs=ys$) implies $x=y$. If both hold, $S$ is cancellative. A semigroup $S$ is a \textit{band} if $x^2=x$ for all $x \in S$. A band is \textit{rectangular} if it is isomorphic to the direct product $L \times R$ of a left zero semigroup $L$ and a right zero semigroup $R$. Let $G$ be a group and $I$ be a non empty set. The \textit{Brandt semigroup} $B_0(G,I)$ consists of elements $(i,g,j)$ for $i,j \in I, g \in G$ and a zero element $0$. Multiplication is defined as: $(i,g,j)(k,h,l) = (i, gh, l)$ if $j=k$, and $0$ otherwise. A semigroup $S$ is a null semigroup with zero $0$ if $xy=0$ for all $x,y \in S$. A semigroup $S$ is said to be an inverse semigroup if for every element $x \in S$, there exists a unique element $y \in S$ such that $x=xyx$ and $y=yxy$.

\paragraph{Notation and conventions.}
\begin{itemize}
  \item $\Gamma(S)$: the \emph{Generalized Latin Square Graph} of $S$ (Definition~\ref{def:glsg}).
  \item $N_S(s_k)$: number of ordered pairs $(x,y)\in S^2$ with $xy=s_k$.
  \item For a vertex $v=(s_i,s_j,s_k)$ set
  \[
    N_R(v):=|\{t\in S\setminus\{s_j\}: s_i t = s_k\}|,
    \qquad
    N_C(v):=|\{t\in S\setminus\{s_i\}: t s_j = s_k\}|.
  \]
  \item $K_n$: complete graph on $n$ vertices.
  \item We use the tensor (Kronecker) product of graphs. Concretely,
  \[
    V(G\times H)=V(G)\times V(H),\qquad
    (u,v)\sim(u',v')\iff u\sim_G u'\ \text{and}\ v\sim_H v'.
  \]
\end{itemize}

\section{Generalized Latin Square Graphs}\label{sec:generalized}
\begin{definition}\label{def:glsg}
Let $S$ be a finite semigroup of order $n$. 
The \emph{Generalized Latin Square Graph} $\Gamma(S)$ is the simple graph with vertex set
\[
V(\Gamma(S))=\{(s_i,s_j,s_k)\in S^3\mid s_i s_j = s_k\}.
\]
Two distinct vertices $(s_i,s_j,s_k)$ and $(s_p,s_q,s_r)$ are adjacent iff they agree in exactly one coordinate.
\end{definition}

\begin{remark}
Since vertices sharing two coordinates represent the same product, we exclude them from adjacency. This restriction preserves the generalized Latin square structure, as allowing such edges would create dependencies stronger than those in classical Latin square graphs. Hence, the graph remains simple and geometrically consistent with the multiplication table structure.
\end{remark}

\begin{example}\label{ex:simple}
Let $S_1=\{s_0,s_1\}$ with $s_1$ an identity and $s_0$ a left-zero. 
Then
$V(\Gamma(S_1))=\{(s_0,s_0,s_0),(s_0,s_1,s_0),(s_1,s_0,s_0),(s_1,s_1,s_1)\}$. 
So, we have that the first vertex is isolated and the other three form a $K_3$.
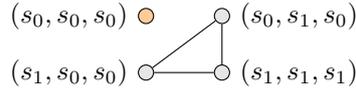
\begin{figure}[h]
\centering
\begin{tikzpicture}[scale=0.5, every node/.style={circle, draw, fill=gray!20, minimum size=2mm, inner sep=0pt}, label distance=3pt]
\node (a) at (0,0)    [fill=orange!40, label=left:{\small$(s_0,s_0,s_0)$}] {};
\node (b) at (2,0)    [label=right:{\small$(s_0,s_1,s_0)$}] {};
\node (c) at (0,-1.5) [label=left:{\small$(s_1,s_0,s_0)$}] {};
\node (d) at (2,-1.5) [label=right:{\small$(s_1,s_1,s_1)$}] {};
\draw (d)--(b);\draw (d)--(c);\draw (b)--(c);
\end{tikzpicture}
\caption{The generalized Latin square graph $\Gamma(S_1)$.}\label{fig:glsg_s1}
\end{figure}
\end{example}

\section{Degree Formula and Regularity}\label{sec:degree}
For $v=(s_i,s_j,s_k)\in V(\Gamma(S))$ define the neighbour classes:
\begin{align*}
N_1(v) &= \{(s_i,t,s_i t)\in V(\Gamma(S))\mid t\in S,\ t\neq s_j,\ s_i t\neq s_k\},\\
N_2(v) &= \{(t,s_j,t s_j)\in V(\Gamma(S))\mid t\in S,\ t\neq s_i,\ t s_j\neq s_k\},\\
N_3(v) &= \{(t,u,s_k)\in V(\Gamma(S))\mid t,u\in S,\ t\neq s_i,\ u\neq s_j,\ t u = s_k\}.
\end{align*}

\begin{lemma}\label{lem:partition}
The sets $N_1(v),N_2(v),N_3(v)$ are pairwise disjoint and exhaust $N(v)$. Moreover
\[
|N_1(v)|=(n-1)-N_R(v),\qquad |N_2(v)|=(n-1)-N_C(v),
\]
\[
|N_3(v)|=N_S(s_k)-1-N_R(v)-N_C(v).
\]
\end{lemma}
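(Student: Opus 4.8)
The plan is to prove the statement in two stages: first establish that $N_1(v), N_2(v), N_3(v)$ partition the neighbourhood $N(v)$, and then evaluate the three cardinalities by direct counting, where the only delicate point is an inclusion--exclusion argument for $N_3(v)$. Throughout I would exploit the fact that a vertex of $\Gamma(S)$ is completely determined by its first two coordinates, since the third is forced by $s_k = s_i s_j$.

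For the partition, I would argue that any neighbour $w$ of $v$ agrees with $v$ in \emph{exactly} one coordinate, so the neighbourhood splits according to which coordinate is shared. If $w$ shares only the first coordinate, then $w = (s_i, t, s_i t)$ with $t \neq s_j$ (to differ in the second coordinate) and $s_i t \neq s_k$ (to differ in the third), which is precisely $N_1(v)$; the second and third cases give $N_2(v)$ and $N_3(v)$ by the same reasoning, using the vertex condition to pin down the forced coordinate. Disjointness is then immediate, since the three classes are distinguished by their first coordinate (equal to $s_i$ in $N_1(v)$, unequal in $N_2(v)$ and $N_3(v)$) and by their second coordinate (equal to $s_j$ in $N_2(v)$, unequal in $N_3(v)$), so no vertex can lie in two classes. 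Exhaustion follows because the three coordinate choices are mutually exclusive and cover all ways of agreeing in exactly one position.

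For $|N_1(v)|$ I would observe that $t \mapsto (s_i, t, s_i t)$ is injective, so it suffices to count admissible $t$: among the $n-1$ elements $t \neq s_j$, I discard those with $s_i t = s_k$, and by definition this discarded set has size $N_R(v)$, giving $|N_1(v)| = (n-1) - N_R(v)$. It is worth recording that $t = s_j$ is excluded from the start and is \emph{not} counted by $N_R(v)$, so no correction term arises. The formula $|N_2(v)| = (n-1) - N_C(v)$ follows by the mirror argument with the first and second coordinates interchanged.

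The step I expect to be the main obstacle is the count for $N_3(v)$, because of the off-by-one bookkeeping around the base pair $(s_i, s_j)$. Again the map $(t,u) \mapsto (t,u,s_k)$ is injective, so I would count pairs with $tu = s_k$, $t \neq s_i$, $u \neq s_j$, starting from the $N_S(s_k)$ pairs with $tu = s_k$ and applying inclusion--exclusion on the forbidden events $t = s_i$ and $u = s_j$. The subtlety is that $|\{u : s_i u = s_k\}| = N_R(v) + 1$ and $|\{t : t s_j = s_k\}| = N_C(v) + 1$, since $N_R(v)$ and $N_C(v)$ are defined to omit $s_j$ and $s_i$ respectively while these sets include them, and the overlap term $t = s_i,\ u = s_j$ contributes exactly $1$, corresponding to $v$ itself. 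Assembling these gives $|N_3(v)| = N_S(s_k) - (N_R(v)+1) - (N_C(v)+1) + 1 = N_S(s_k) - 1 - N_R(v) - N_C(v)$. The principal risk is mishandling the base pair $(s_i, s_j)$, so I would verify explicitly that $v$ is removed exactly once and that the two $+1$ corrections are applied consistently.
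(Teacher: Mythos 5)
Your proposal is correct and follows essentially the same route as the paper: split $N(v)$ by which coordinate is shared and count each class directly. The paper's own proof is only a one-line sketch, so your added detail --- in particular the inclusion--exclusion for $|N_3(v)|$ with the two $+1$ corrections for the base pair $(s_i,s_j)$ and the single add-back for $v$ itself --- is exactly the bookkeeping the paper leaves implicit, and it is handled correctly.
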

\begin{proof}
Since adjacency forces agreement in exactly one coordinate, we have three cases. 
Disjointness is obvious. 
Moreover, counting follows by fixing the free coordinate and excluding those choices that produce $s_k$. This completes the proof.
\end{proof}

\begin{theorem}\label{thm:degree}
For $v=(s_i,s_j,s_k)\in V(\Gamma(S))$ we have
\[
\deg(v)=2n-3+Q(v),\qquad Q(v)=N_S(s_k)-2N_R(v)-2N_C(v).
\]
Moreover $\Gamma(S)$ is regular iff $Q$ is constant on $V(\Gamma(S))$.
\end{theorem}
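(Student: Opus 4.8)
The plan is to derive the degree formula as an immediate consequence of Lemma~\ref{lem:partition} and then to read off the regularity criterion by isolating the constant part of the degree. Since the lemma already guarantees that $N_1(v)$, $N_2(v)$, $N_3(v)$ are pairwise disjoint and exhaust the neighbourhood $N(v)$, the degree is nothing more than the sum of their three cardinalities:
\[
\deg(v) = |N_1(v)| + |N_2(v)| + |N_3(v)|.
\]
First I would substitute the three expressions supplied by the lemma and collect terms. The two contributions of $(n-1)$ coming from $N_1$ and $N_2$, together with the $-1$ appearing in the size of $N_3$, combine into the vertex-independent constant $2n-3$; the remaining terms assemble into $N_S(s_k) - 2N_R(v) - 2N_C(v)$, which is by definition $Q(v)$. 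After this short cancellation one obtains $\deg(v) = 2n-3+Q(v)$ with no case analysis required.

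For the regularity equivalence I would argue that $\Gamma(S)$ is regular exactly when the map $v \mapsto \deg(v)$ is constant on $V(\Gamma(S))$. Because the summand $2n-3$ does not depend on $v$, adding it cannot change whether a function is constant; hence $\deg$ is constant if and only if $Q$ is constant, yielding both implications simultaneously. I would also emphasise that the formula holds uniformly over \emph{all} vertices, isolated ones included: an isolated vertex has $\deg(v)=0$, which is consistent with the value $Q(v)=3-2n$, as can be checked directly against Example~\ref{ex:simple}, where $n=2$ forces $Q(v)=-1$ at the isolated vertex.

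The only point requiring attention—rather than a genuine obstacle—is that the degree must equal precisely the sum of the three class sizes, with neither double counting nor omitted neighbours; but this is exactly the disjointness-and-exhaustion statement of Lemma~\ref{lem:partition}, which I am entitled to invoke. Consequently the theorem reduces to elementary arithmetic followed by the observation that a constant offset is irrelevant to constancy of a function.
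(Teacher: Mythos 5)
Your proposal is correct and follows exactly the paper's own route: summing the three cardinalities from Lemma~\ref{lem:partition} to obtain $2(n-1)-1+N_S(s_k)-2N_R(v)-2N_C(v)=2n-3+Q(v)$, and noting that the constant offset $2n-3$ makes constancy of $\deg$ equivalent to constancy of $Q$. The extra sanity check on the isolated vertex of Example~\ref{ex:simple} is a nice addition but does not change the argument.
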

\begin{proof}
We sum the three cardinalities in Lemma~\ref{lem:partition}. So, we get the formula. Hence the proof is complete.
\end{proof}

\subsection{Bounds and boundary characterizations}
We collect trivial bounds and give short examples.

\begin{lemma}\label{lem:bounds_combined}
Let $v=(s_i,s_j,s_k)\in V(\Gamma(S))$ and $n=|S|$. Then
\begin{enumerate}
  \item $0\le N_R(v),N_C(v)\le n-1$. Equality $N_R(v)=n-1$ (resp.\ $N_C(v)=n-1$) holds iff $s_i t=s_k$ for every $t\in S\setminus\{s_j\}$ (resp.\ $t s_j=s_k$ for every $t\in S\setminus\{s_i\}$).
  \item $N_S(s_k)\ge 1$. Moreover $N_S(s_k)=1$ iff the only ordered pair producing $s_k$ is $(s_i,s_j)$.
\end{enumerate}
\end{lemma}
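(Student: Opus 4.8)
The plan is to verify each of the two numbered assertions directly from the definitions of $N_R(v)$, $N_C(v)$, and $N_S(s_k)$ given in the Notation section, since these are elementary cardinality bounds. For part (1), recall that $N_R(v)=|\{t\in S\setminus\{s_j\}: s_i t=s_k\}|$. First I would note that this set is a subset of $S\setminus\{s_j\}$, which has exactly $n-1$ elements, so the bounds $0\le N_R(v)\le n-1$ are immediate. The lower bound $0$ requires no hypothesis, and the upper bound is $n-1$ precisely when the counted set equals all of $S\setminus\{s_j\}$, i.e.\ when $s_i t=s_k$ holds for every $t\in S\setminus\{s_j\}$. This is just the characterization of when a subset of an $(n-1)$-element set attains full cardinality. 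The argument for $N_C(v)$ is identical with the roles of left and right multiplication swapped, using $N_C(v)=|\{t\in S\setminus\{s_i\}: t s_j=s_k\}|$.

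For part (2), I would argue that $N_S(s_k)=|\{(x,y)\in S^2: xy=s_k\}|\ge 1$ because the vertex $v=(s_i,s_j,s_k)$ lies in $V(\Gamma(S))$, which by Definition~\ref{def:glsg} forces $s_i s_j=s_k$. Hence the ordered pair $(s_i,s_j)$ is always a witness, so the counted set is nonempty and $N_S(s_k)\ge 1$. The equality case $N_S(s_k)=1$ then says the counted set is the singleton $\{(s_i,s_j)\}$, which is exactly the statement that $(s_i,s_j)$ is the only ordered pair producing $s_k$.

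I expect both parts to be essentially routine, as they amount to recognizing a counted set as a subset of a set of known size and reading off the extremal cases. The only subtlety worth stating carefully is that the lower bound $N_S(s_k)\ge 1$ is not merely the trivial nonnegativity of a cardinality but genuinely uses the membership condition $s_i s_j=s_k$ built into the vertex set of $\Gamma(S)$; this is what guarantees at least one factorization rather than allowing the empty product count. The mildest obstacle is purely expository: one must be careful that in the equality characterizations of part (1) the pair $(s_i,s_j)$ itself is \emph{excluded} from the counting range (we count $t\ne s_j$ on the left and $t\ne s_i$ on the right), so the phrase ``for every $t\in S\setminus\{s_j\}$'' correctly describes the full range without reintroducing the base coordinate.
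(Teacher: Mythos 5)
Your proposal is correct and follows essentially the same route as the paper's proof: part (1) by viewing the counted set as a subset of the $(n-1)$-element set $S\setminus\{s_j\}$ (resp.\ $S\setminus\{s_i\}$) and reading off the equality case, and part (2) by using the vertex condition $s_i s_j=s_k$ to exhibit a witness pair. No gaps; your remark that the lower bound in (2) genuinely uses membership in $V(\Gamma(S))$ matches the paper's reasoning.
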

\begin{proof}
(1) By definition, $N_R(v)$ counts elements in $S \setminus \{s_j\}$, which has size $n-1$. Thus $0 \le N_R(v) \le n-1$. Since $N_R(v)=n-1$ holds iff this subset equals the entire set, so $s_i t = s_k$ for every $t \neq s_j$. The argument for $N_C(v)$ is symmetric.
(2) Since $v=(s_i,s_j,s_k)$ is a vertex, $s_i s_j = s_k$. Thus at least one ordered pair produces $s_k$, so $N_S(s_k) \ge 1$. If $N_S(s_k)=1$, then $(s_i,s_j)$ must be the unique pair. This completes the proof.
\end{proof}

\begin{remark}
If $N_R(v)=n-1$ then $|N_1(v)|=0$. 
If $N_S(s_k)=1$ then $|N_3(v)|=0$.
\end{remark}

\section{Regularity in Special Classes}\label{sec:special_classes}

\subsection{Cancellative semigroups and groups}
In a cancellative semigroup, the equations $s_i t = s_k$ and $t s_j = s_k$ admit unique solutions 
\cite{Howie1995,Clifford1961}.
Since $(s_i,s_j,s_k)$ is a vertex, the unique solutions must be $t=s_j$ and $t=s_i$, respectively.

\begin{corollary}\label{cor:group_regularity}
If $S$ is cancellative then $N_R(v)=N_C(v)=0$ for every $v$. Hence
 $\deg(v)=2n-3+N_S(s_k)$. If $S$ is a group then $N_S(s_k)=n$ for all $s_k$, so $\Gamma(S)$ is $(3n-3)$-regular.
\end{corollary}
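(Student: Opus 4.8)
The plan is to treat the three assertions in sequence, each of which reduces to a short application of the cancellation law together with the degree formula of Theorem~\ref{thm:degree}.

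First I would show that $N_R(v)=N_C(v)=0$ for every vertex $v=(s_i,s_j,s_k)$. The crucial fact, already noted just above the statement, is that vertexhood forces $s_i s_j = s_k$. For $N_R(v)$, suppose some $t\in S$ satisfies $s_i t = s_k = s_i s_j$; left cancellation then yields $t=s_j$. Since $N_R(v)$ by definition counts only those $t$ lying in $S\setminus\{s_j\}$, no admissible $t$ survives, so $N_R(v)=0$. The argument for $N_C(v)$ is entirely symmetric: right cancellation applied to $t s_j = s_k = s_i s_j$ forces $t=s_i$, which is precisely the element excluded from the counting set, whence $N_C(v)=0$.

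Second, I would substitute these vanishing counts into $Q(v)=N_S(s_k)-2N_R(v)-2N_C(v)$ to obtain $Q(v)=N_S(s_k)$, and then invoke Theorem~\ref{thm:degree} directly to conclude $\deg(v)=2n-3+N_S(s_k)$.

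Finally, for the group case I would compute $N_S(s_k)$ explicitly. Every group is cancellative, so the first two parts already apply. To evaluate $N_S(s_k)$ I count the ordered pairs $(x,y)$ with $xy=s_k$: for each of the $n$ choices of $x$ there is exactly one $y=x^{-1}s_k$ solving the equation, so $N_S(s_k)=n$ independently of $s_k$. Substituting into the degree formula gives $\deg(v)=2n-3+n=3n-3$ uniformly, which is the claimed $(3n-3)$-regularity. I do not anticipate a genuine obstacle here; the only point demanding care is to confirm that the unique solution $t=s_j$ (respectively $t=s_i$) coincides with the element deliberately removed from the definition of $N_R$ (respectively $N_C$), so that the relevant counts vanish rather than equal one.
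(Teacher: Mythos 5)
Your proposal is correct and follows essentially the same route as the paper: use cancellation to show the unique solutions of $s_i t = s_k$ and $t s_j = s_k$ are exactly the excluded elements $s_j$ and $s_i$, so $N_R(v)=N_C(v)=0$, then substitute into Theorem~\ref{thm:degree}. You are somewhat more explicit than the paper in verifying $N_S(s_k)=n$ for groups via the bijection $x\mapsto x^{-1}s_k$, but this is a detail the paper simply asserts, not a different method.
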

\begin{proof}
By the cancellation property, the equations have unique solutions. Since $s_i s_j = s_k$, so the solutions are uniquely $t=s_j$ and $t=s_i$. Therefore, $N_R(v)=0$ and $N_C(v)=0$. So, we substitute these into Theorem~\ref{thm:degree} and get the result. Hence the proof is complete.
\end{proof}
\begin{proposition}[Classical Regularity \cite{Denes1974}]
The Latin square graph associated with a finite group of order \(n\) is a regular graph of degree \(3n-3\).
\end{proposition}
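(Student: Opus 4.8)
The plan is to recognize this proposition as a direct specialization of Corollary~\ref{cor:group_regularity}, translated into the classical Latin square terminology. First I would verify that the classical Latin square graph of a finite group $G$ coincides with the graph $\Gamma(G)$ of Definition~\ref{def:glsg}. This rests on the standard fact that the multiplication table of a group is a Latin square: for each ordered pair $(s_i,s_j)$ there is exactly one product $s_i s_j = s_k$, so the vertex set $\{(s_i,s_j,s_k) : s_i s_j = s_k\}$ has exactly $n^2$ elements, one per filled cell of the Cayley table. The classical adjacency rule --- share exactly one coordinate --- is precisely the adjacency rule in Definition~\ref{def:glsg}. Hence the two graphs are identical, and it suffices to show that $\Gamma(G)$ is $(3n-3)$-regular.

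Next I would invoke the cancellative machinery already in place. Since every group is cancellative, Corollary~\ref{cor:group_regularity} gives $N_R(v)=N_C(v)=0$ for every vertex $v$, so the degree formula of Theorem~\ref{thm:degree} collapses to $\deg(v)=2n-3+N_S(s_k)$. It then remains only to evaluate the global multiplicity $N_S(s_k)$, which I would do using the existence of inverses: fixing any target $s_k\in G$ and any first factor $x\in G$, the equation $xy=s_k$ has the unique solution $y=x^{-1}s_k$, so letting $x$ range over all $n$ elements of $G$ yields exactly $n$ ordered pairs $(x,y)$ with $xy=s_k$. Thus $N_S(s_k)=n$ independently of $k$.

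Substituting back gives $Q(v)=n-0-0=n$ for every $v$, so $Q$ is constant on $V(\Gamma(G))$; by the regularity criterion of Theorem~\ref{thm:degree} the graph is regular, with common degree $2n-3+n=3n-3$. This finishes the argument.

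Since nearly every ingredient is already established, I anticipate no substantive obstacle in the computation itself. The one step meriting care is the identification of the classical Latin square graph with $\Gamma(G)$ --- specifically checking that the classical vertex set (all filled cells of the Cayley table, indexed by triples) matches $V(\Gamma(G))$ exactly, and that the "exactly one shared coordinate" adjacency transfers verbatim. Once that bookkeeping is settled, the proposition follows immediately as a corollary of the preceding cancellative analysis.
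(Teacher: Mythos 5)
Your proposal is correct and follows essentially the same route as the paper, which simply cites Corollary~\ref{cor:group_regularity}; your version just spells out the details the paper elides (the identification of the classical Latin square graph with $\Gamma(G)$ and the computation $N_S(s_k)=n$ via inverses).
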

\begin{proof}
This follows directly from Corollary~\ref{cor:group_regularity}.
\end{proof}
\subsection{Bands}
\begin{proposition}
If $S$ is a rectangular band $L\times R$ then $\Gamma(S)$ is regular.
\end{proposition}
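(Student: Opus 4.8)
The plan is to compute the three counting invariants $N_S(s_k)$, $N_R(v)$, $N_C(v)$ explicitly for a rectangular band and then show that the quantity $Q(v)$ appearing in Theorem~\ref{thm:degree} is independent of the vertex $v$; regularity is then immediate from the criterion in that theorem. To set up coordinates, write $p=|L|$ and $q=|R|$, so $n=pq$, and represent each element of $S=L\times R$ as a pair $(a,c)$ with $a\in L$ and $c\in R$. The structural fact I would exploit throughout is that multiplication collapses to $(a,c)(b,d)=(a,d)$, since $L$ is left-zero and $R$ is right-zero. In particular every element is idempotent, so each $s_k$ genuinely occurs as a product and indexes at least one vertex.

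First I would count factorizations. Fixing $s_k=(a,d)$, a pair $\bigl((a',c'),(b',d')\bigr)$ multiplies to $(a',d')$, so it equals $s_k$ precisely when $a'=a$ and $d'=d$, while $c'$ and $b'$ range freely over $R$ and $L$ respectively. This gives $N_S(s_k)=|R|\cdot|L|=pq=n$, the same value for every $s_k$. Next I would count the row and column alternatives for a vertex $v=\bigl((a,c),(b,d),(a,d)\bigr)$. The equation $s_i t=s_k$ reads $(a,c)(b',d')=(a,d')=(a,d)$, forcing $d'=d$ and leaving $b'$ free, so exactly $p$ elements $t=(b',d)$ solve it; one of these is $s_j=(b,d)$ itself, and excluding it yields $N_R(v)=p-1$. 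Symmetrically, $t s_j=s_k$ forces the first coordinate of $t$ to equal $a$, giving $q$ solutions of which $s_i$ is one, so $N_C(v)=q-1$.

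Substituting into $Q(v)=N_S(s_k)-2N_R(v)-2N_C(v)$ then gives $Q(v)=n-2(p-1)-2(q-1)$, which depends only on the fixed sizes $p,q$ and not on the chosen vertex. By the regularity criterion of Theorem~\ref{thm:degree}, $\Gamma(S)$ is therefore regular, of degree $2n-3+Q(v)=3pq-2p-2q+1$.

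The computation is essentially mechanical once the collapsed multiplication rule is in hand, so I expect no serious difficulty. The one point that needs genuine care is the exclusion step: the definitions of $N_R$ and $N_C$ subtract the element $s_j$ (resp.\ $s_i$) from the count, which is only legitimate if $s_j$ is actually among the solutions of $s_i t=s_k$ (resp.\ $s_i$ among the solutions of $ts_j=s_k$). This holds because $s_is_j=s_k$ by the definition of a vertex, so both excluded elements are indeed counted; I would state this explicitly rather than leave it implicit, since it is exactly where an off-by-one slip would occur.
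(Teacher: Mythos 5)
Your proof is correct and follows essentially the same route as the paper: compute $N_S(s_k)=|L|\,|R|$, $N_R(v)=|L|-1$, $N_C(v)=|R|-1$ from the collapsed multiplication $(a,c)(b,d)=(a,d)$, observe that $Q$ is then constant, and invoke Theorem~\ref{thm:degree}. Your explicit check that $s_j$ (resp.\ $s_i$) really is among the solutions being excluded is a nice touch the paper leaves implicit, but it does not change the argument.
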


\begin{proof}
Let $v=(s_i,s_j,s_k)$ be a vertex. Write $s_i=(\ell_i,r_i)$, $s_j=(\ell_j,r_j)$ and $s_k=(\ell_k,r_k)$. 
Since multiplication in $L\times R$ yields $(\ell_i,r_i)(\ell_j,r_j)=(\ell_i,r_j)$,  the product $s_i s_j=s_k$ implies 
\[
\ell_k=\ell_i,\qquad r_k=r_j.
\]
Any ordered pair $(s,t)=(\ell,r)(\ell',r')$ equals $s_k$ when $\ell=\ell_k$ and $r'=r_k$. Thus, 
\[
N_S(s_k)=|R|\cdot|L|,
\]
which depends only on $|L|$ and $R$.
For the vertex $v$, the count $N_R(v)$ is the number of choices of $t=(\ell_t,r_t)$ with $r_t=r_j$ (excluding $t=s_j$). Hence 
\[
N_R(v)=|L|-1.
\]
By symmetry, 
\[
N_C(v)=|R|-1.
\]
These counts depend only on $|L|$ and $|R|$. So, $Q(v)$ is independent of $v$. By Theorem~\ref{thm:degree}, $\Gamma(S)$ is regular. Hence the proof is complete.
\end{proof}

\begin{remark}
The result is invariant under swapping $L$ and $R$. Since the counts
\[
N_S(s_k)=|L|\cdot|R|,\qquad N_R(v)=|L|-1,\qquad N_C(v)=|R|-1
\]
depend only on $|L|,|R|$, the quantity $Q=N_S-2N_R-2N_C$ is symmetric. Hence exchanging $L$ and $R$ does not affect regularity.
\end{remark}
\begin{lemma}[Spectrum of Rectangular Bands]\label{lem:rectangular_spectrum}
Let $S = L \times R$ be a rectangular band. Since the multiplication is given by $(l,r)(l',r') = (l,r')$, the vertex $v=(s_i,s_j,s_k)$ belongs to a block determined by its $L$-class. Consequently, the adjacency matrix of $\Gamma(S)$ is block-diagonal, and its spectrum is the union of the spectra of these blocks.
\end{lemma}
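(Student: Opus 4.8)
The plan is to separate the claim into a combinatorial core and a routine spectral consequence. If the vertex set of $\Gamma(S)$ partitions into classes with no edge joining two distinct classes, then ordering the rows and columns of the adjacency matrix class by class makes $A(\Gamma(S))$ block-diagonal, $A=\bigoplus_\ell A_\ell$ with $A_\ell$ the adjacency matrix of the subgraph induced on class $\ell$. Then $\det(\lambda I-A)=\prod_\ell\det(\lambda I-A_\ell)$, so $\mathrm{spec}(\Gamma(S))$ is the multiset union of the block spectra. This implication is standard, so the whole problem is to produce the partition and to show that adjacency respects it.

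First I would record the shape of a vertex. Writing $s_i=(\ell_i,r_i)$ and $s_j=(\ell_j,r_j)$, the product law gives $s_is_j=(\ell_i,r_j)$, so $s_k=(\ell_i,r_j)$ and a vertex is determined by the pair $(s_i,s_j)$; there are $|L|^2|R|^2$ of them. Following the statement I would take the block label to be the $L$-coordinate $\ell_i$ of $s_i$, which equals the $L$-coordinate $\ell_k$ of the product, and set $B_\ell=\{(s_i,s_j,s_k):\ell_i=\ell\}$. To test that edges stay inside a block I would use the partition $N(v)=N_1(v)\sqcup N_2(v)\sqcup N_3(v)$ of Lemma~\ref{lem:partition}. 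A row-neighbor in $N_1(v)$ keeps the first coordinate $s_i$ fixed, hence keeps $\ell_i$; a product-neighbor in $N_3(v)$ keeps the third coordinate $s_k=(\ell_i,r_j)$ fixed, hence also keeps $\ell_i=\ell_k$. So $N_1$- and $N_3$-edges never leave $B_{\ell_i}$.

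The step I expect to be the main obstacle is the column class $N_2(v)$. A neighbor there has the form $(t,s_j,ts_j)$ with $t=(\ell_t,r_t)$, and since $ts_j=(\ell_t,r_j)$ the admissibility condition $ts_j\neq s_k=(\ell_i,r_j)$ forces $\ell_t\neq\ell_i$. Hence every column-neighbor lies in a different $L$-block, and by the count $|N_2(v)|=(n-1)-N_C(v)$ of Lemma~\ref{lem:partition} together with Lemma~\ref{lem:bounds_combined} this class is nonempty whenever some $t$ has $\ell_t\neq\ell_i$, that is, whenever $|L|\ge 2$. So the $L$-labeling produces a genuinely block-diagonal matrix only in the degenerate regime where $N_2$ is forced empty (for instance the right-zero case $|L|=1$); in a two-sided rectangular band it does not, and indeed a direct check on $|L|=|R|=2$ shows $\Gamma(S)$ to be connected.

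I would therefore direct the real work toward securing an adjacency-invariant labeling. One route is to restrict the statement to the degenerate bands, where the single surviving neighbor class fixes one coordinate and the blocks are honestly disconnected. A second route is to replace the lone $L$-coordinate by the coarsest invariant fixed simultaneously by $N_1$, $N_2$ and $N_3$, equivalently to read the connected components off the three neighbor rules and take those as the blocks. Either way, once a labeling invariant under all three adjacency types is in hand, the block-diagonal structure and the union-of-spectra conclusion of the first paragraph follow immediately; pinning down that labeling is the crux.
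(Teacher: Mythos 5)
Your analysis is sound, and it in fact exposes a genuine error in the lemma rather than a gap in your own argument. The paper's proof consists of the single assertion that ``two vertices are adjacent only if they agree on the $L$-coordinate,'' which is exactly the claim your examination of the column class $N_2(v)$ refutes: for $v=(s_i,s_j,s_k)$ with $s_i=(\ell_i,r_i)$, $s_j=(\ell_j,r_j)$, $s_k=(\ell_i,r_j)$, a neighbour $(t,s_j,ts_j)\in N_2(v)$ has $ts_j=(\ell_t,r_j)$, and the admissibility condition $ts_j\neq s_k$ forces $\ell_t\neq\ell_i$, so the neighbour's first and third coordinates both carry a \emph{different} $L$-label. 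Since $|N_2(v)|=|R|(|L|-1)$ by Lemma~\ref{lem:partition} and the counts $N_C(v)=|R|-1$ established in the regularity proposition, such cross-block edges exist whenever $|L|\ge 2$. A concrete counterexample with $|L|=|R|=2$: the vertices $((a,x),(a,x),(a,x))$ and $((b,x),(a,x),(b,x))$ agree in exactly the second coordinate, hence are adjacent, yet lie in distinct $L$-blocks. No choice of coordinate ($\ell_i$, $\ell_j$, $\ell_k$, or the $R$-analogues) is preserved by all three neighbour classes, so the adjacency matrix is not block-diagonal under any such labelling.

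Consequently the lemma holds only in the degenerate regime you identify (e.g.\ $|L|=1$, i.e.\ right-zero semigroups, where $N_2(v)=\emptyset$, or symmetrically $|R|=1$), and your two proposed repairs --- restricting the statement to those cases, or replacing the $L$-labelling by the actual connected components of $\Gamma(S)$ --- are the correct ways forward; the second is vacuous for $|L|,|R|\ge 2$ since the graph is then connected. The reduction in your first paragraph (class-respecting adjacency $\Rightarrow$ block-diagonal $A$ $\Rightarrow$ spectrum is the multiset union of block spectra) is the same routine step the paper relies on; the difference is that you actually test the hypothesis and the paper does not. You should not try to close this as stated --- the statement needs to be corrected, not reproved.
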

\begin{proof}
The vertices of $\Gamma(S)$ are partitioned by their $L$-coordinates. Two vertices are adjacent only if they agree on the $L$-coordinate. Hence, there are no edges between distinct blocks. Therefore, the adjacency matrix is block-diagonal, implying the claimed spectral decomposition. This completes the proof.
\end{proof}
\subsection{Inverse semigroups with unique solvability}\label{sec:inverse}
\begin{proposition}\label{prop:unique_solvability}
Let $S$ be a finite inverse semigroup. If for a vertex $v=(s_i,s_j,s_k)$ the equations $s_i x=s_k$ and $y s_j=s_k$ admit unique solutions, then $N_R(v)=N_C(v)=0$ and $\deg(v)=2n-3+N_S(s_k)$.
\end{proposition}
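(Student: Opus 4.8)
The plan is to reduce the statement directly to Theorem~\ref{thm:degree} by showing that the unique-solvability hypothesis forces both local counts $N_R(v)$ and $N_C(v)$ to vanish; the argument runs exactly parallel to the cancellative case handled in Corollary~\ref{cor:group_regularity}, the only difference being that unique solvability is now assumed pointwise at $v$ rather than derived globally from cancellation.

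First I would invoke the vertex condition. Since $v=(s_i,s_j,s_k)\in V(\Gamma(S))$, Definition~\ref{def:glsg} gives $s_i s_j=s_k$. Hence $x=s_j$ is a solution of $s_i x=s_k$ and $y=s_i$ is a solution of $y s_j=s_k$; these are the concrete witnesses that the hypothesis promises to be unique. Next I would combine existence with uniqueness: because $s_i x=s_k$ has a unique solution and $s_j$ is one, $s_j$ is the \emph{only} solution, so no $t\in S\setminus\{s_j\}$ satisfies $s_i t=s_k$, which is precisely the assertion $N_R(v)=0$. The symmetric argument applied to $y s_j=s_k$ with witness $s_i$ yields $N_C(v)=0$.

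Finally I would substitute into the degree formula of Theorem~\ref{thm:degree}. With $N_R(v)=N_C(v)=0$ the correction term collapses to $Q(v)=N_S(s_k)-2N_R(v)-2N_C(v)=N_S(s_k)$, whence $\deg(v)=2n-3+N_S(s_k)$, as claimed. I do not expect a genuine computational obstacle here; the subtlety is conceptual. The inverse-semigroup hypothesis never enters the chain of equalities, since all the work is carried out by the assumed unique solvability at $v$. Its true role is to furnish a natural class of semigroups in which such unique solutions occur, and I would flag that the conclusion in fact holds verbatim for \emph{any} finite semigroup meeting the stated solvability condition at $v$, so the proposition is best read as isolating the exact local hypothesis under which the cancellative-type degree formula persists.
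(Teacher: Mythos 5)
Your argument is correct and follows the same route as the paper: identify $s_j$ and $s_i$ as the (unique) solutions forced by the vertex condition $s_is_j=s_k$, conclude $N_R(v)=N_C(v)=0$, and substitute into Theorem~\ref{thm:degree}. Your side remark that the inverse-semigroup hypothesis is never actually used is accurate and consistent with the paper, which likewise draws all its force from the assumed unique solvability.
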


\begin{proof}
Since the equation $s_i x=s_k$ has a unique solution ($x=s_j$), no $t \neq s_j$ satisfies $s_i t=s_k$. Thus, $N_R(v)=0$. By symmetry, $N_C(v)=0$. So, substituting these into Theorem~\ref{thm:degree} gives the result. Hence the proof is complete.
\end{proof}

\begin{remark}
The hypothesis of unique solvability is essential. Since if only one equation admits a unique solution, then only the corresponding alternative count vanishes. So, the degree formula must be applied with the nonzero count retained.
\end{remark}

\subsection{Brandt semigroups}\label{sec:brandt}
Let $S=B_0(G,I)$ be the Brandt semigroup with structure group $G$ of order $m$ and index set $I$ of size $n$. Nonzero elements are triples $(i,g,j)$ and there is a zero $0$. Then $|S|=1+mn^2$ (see \cite{Howie1995,Clifford1961}).

\begin{lemma}[Sufficient impossibility condition]\label{lem:impossible}
Let $S$ be finite of order $s$. Suppose $z_0, z_1\in S$ and set $\Delta=N_S(z_0)-N_S(z_1)$. 
If $\Delta>4(s-1)$ then $Q$ cannot be constant on $V(\Gamma(S))$. In particular $\Gamma(S)$ is not regular.
\end{lemma}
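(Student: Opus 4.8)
The plan is to argue by contradiction using the two‑sided estimate on $Q$ that the bounds in Lemma~\ref{lem:bounds_combined} provide. Suppose $Q\equiv c$ is constant on $V(\Gamma(S))$. For an arbitrary vertex $v=(s_i,s_j,s_k)$ I would use $0\le N_R(v),N_C(v)\le s-1$ to sandwich
\[
N_S(s_k)-4(s-1)\ \le\ Q(v)=N_S(s_k)-2N_R(v)-2N_C(v)\ \le\ N_S(s_k).
\]
Setting $Q(v)=c$ then yields, for every element $s_k$ realized as a third coordinate (equivalently every $s_k$ with $N_S(s_k)\ge 1$), the clamp $c\le N_S(s_k)\le c+4(s-1)$.

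First I would apply the upper part of this clamp to $z_0$ and the lower part to $z_1$. Since $\Delta=N_S(z_0)-N_S(z_1)>4(s-1)>0$ forces $N_S(z_0)>N_S(z_1)\ge 0$, in particular $N_S(z_0)\ge 1$, so a vertex with third coordinate $z_0$ exists and $N_S(z_0)\le c+4(s-1)$. Taking $z_1$ also in the image of multiplication, the lower clamp gives $c\le N_S(z_1)$, and subtracting produces
\[
\Delta=N_S(z_0)-N_S(z_1)\ \le\ \bigl(c+4(s-1)\bigr)-c\ =\ 4(s-1),
\]
contradicting the hypothesis $\Delta>4(s-1)$. Hence $Q$ is non-constant, and Theorem~\ref{thm:degree} then gives that $\Gamma(S)$ is not regular.

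The step I expect to be the real obstacle is the clause ``a vertex with third coordinate $z_1$ exists,'' i.e.\ that $z_1$ lies in the image $\{xy:x,y\in S\}$. The lower clamp $c\le N_S(s_k)$ is only available for $s_k$ in that image, since an element with $N_S=0$ contributes no vertex and hence no constraint, so without this the subtraction step is unjustified. This is not cosmetic: for a null semigroup one computes $N_S(0)=s^2$ and $N_R(v)=N_C(v)=s-1$, hence $Q\equiv(s-2)^2$ is constant, yet choosing $z_0=0$ and any non‑product $z_1$ gives $\Delta=s^2>4(s-1)$ for $s\ge 3$. I would therefore record the lemma for $z_0,z_1$ both lying in the image of multiplication (automatic for $z_0$ once $\Delta>0$), after which the inequalities above are immediate and the argument closes.
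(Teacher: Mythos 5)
Your argument is essentially the paper's: both proofs rest on the bound $0\le 2N_R(v)+2N_C(v)\le 4(s-1)$ and conclude that constancy of $Q$ forces $\Delta\le 4(s-1)$. Your additional caveat is correct and genuinely needed: the paper's proof silently assumes that both $z_0$ and $z_1$ occur as third coordinates of actual vertices (i.e.\ lie in $S^2$), and without that hypothesis the lemma as stated is false --- your null-semigroup example, where $Q\equiv(s-2)^2$ is constant yet $\Delta=s^2>4(s-1)$ for $s\ge 3$ when $z_1$ is a non-product, shows this; the gap is harmless in the paper's only application (Brandt semigroups, where $z_0=0$ and the nonzero $z_1$ are both products), but the hypothesis you propose should be added to the statement.
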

\begin{proof}
For any vertex $v$ we have $0\le 2N_R(v)+2N_C(v)\le 4(s-1)$. Since if $Q$ were constant then the difference $\Delta$ would equal the difference of two such terms, $\Delta\le 4(s-1)$. This is a contradiction. This completes the proof.
\end{proof}

\begin{proposition}\label{prop:brandt}
If $n>1$ then $\Gamma(B_0(G,I))$ is not regular.
\end{proposition}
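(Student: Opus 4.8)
The plan is to exhibit two vertices of $\Gamma(B_0(G,I))$ with different degrees, which by Theorem~\ref{thm:degree} reduces to showing that $Q$ takes two distinct values. The governing intuition is that the zero element $0$ is absorbing, so any vertex built from $0$ carries large alternative counts $N_R,N_C$ and a large factorization count $N_S(0)$, whereas a vertex with a nonzero third coordinate inherits the unique solvability of the structure group $G$ and therefore has $N_R=N_C=0$. Writing $m=|G|$, $n=|I|$ and $|S|=1+mn^2$, I would first compute the two relevant values of $N_S$.

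First I would compute $N_S$ on a nonzero target $(a,c,b)$: a factorization $(i,g,j)(k,h,l)=(a,c,b)$ forces $i=a$, $l=b$, $j=k$ and $gh=c$, so the free data are the common middle index $j=k\in I$ and $g\in G$ (with $h=g^{-1}c$ determined), giving $N_S((a,c,b))=mn$. For the zero target I would count complementarily: the ordered pairs with nonzero product are exactly those $(i,g,j)(j,h,l)$ with matching middle index, of which there are $mn^2\cdot mn=m^2n^3$, so $N_S(0)=|S|^2-m^2n^3=(1+mn^2)^2-m^2n^3$.

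Next I would evaluate $Q$ at two explicit vertices. At any vertex $v$ with nonzero third coordinate, the middle-index argument shows $s_i t=s_k$ and $t s_j=s_k$ have the unique solutions $t=s_j$ and $t=s_i$, so $N_R(v)=N_C(v)=0$ and hence $Q(v)=mn$. At the vertex $(0,0,0)$, absorption gives $0\cdot t=0$ and $t\cdot 0=0$ for every $t$, so $N_R=N_C=|S|-1$ and $Q(0,0,0)=N_S(0)-4(|S|-1)$. Subtracting yields $Q(0,0,0)-mn=m^2n^3(n-1)-2mn^2-mn+1$, and the final task is to confirm this is nonzero for every $n\ge 2$: for $n\ge 3$ the leading term $m^2n^3(n-1)$ dominates and the expression is positive, while for $n=2$ it reduces to $8m^2-10m+1$, whose discriminant $68$ is not a perfect square, so it admits no integer root. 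Either way the two vertices have distinct degrees and $\Gamma(S)$ is not regular.

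The main obstacle, and the reason I would not lean on Lemma~\ref{lem:impossible} as the whole argument, is the boundary case $(m,n)=(1,2)$. Applying that lemma with $z_0=0$ and $z_1$ nonzero requires $\Delta=N_S(0)-mn>4(|S|-1)$, which holds comfortably once $m\ge 2$ or $n\ge 3$ but at $(1,2)$ gives $\Delta=15$ against $4(|S|-1)=16$, so the sufficient condition narrowly fails. Tellingly, this is exactly where $Q(0,0,0)-mn=-1$: the factorization surplus at $0$ is outweighed by the penalty $-4(|S|-1)$ from its large alternative counts. The direct two-vertex comparison still resolves this case (one checks $\deg(0,0,0)=8\ne 9$), so I would present the explicit $Q$-computation as the proof and invoke the impossibility lemma only as heuristic motivation for why $0$ is the natural vertex to test.
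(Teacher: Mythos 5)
Your proposal is correct, and it takes a genuinely different (and arguably cleaner) route than the paper. The paper also computes $N_S(z)=mn$ and $N_S(0)=(1+mn^2)^2-m^2n^3$, but then splits into two cases: for $m\ge 2$ it invokes Lemma~\ref{lem:impossible} via the inequality $\Delta>4(|S|-1)$, and for $m=1$ it falls back on a separate direct comparison between a nonzero-target vertex and a vertex of the form $(x,y,0)$ with $x,y\ne 0$ (for which $N_R=N_C=n(n-1)$ rather than $|S|-1$). You instead make a single unified two-vertex comparison, evaluating $Q$ exactly at a nonzero-target vertex (where unique solvability in $G$ gives $Q=mn$) and at $(0,0,0)$ (where absorption gives $N_R=N_C=|S|-1$, hence $Q=N_S(0)-4(|S|-1)$), and then showing $m^2n^3(n-1)-2mn^2-mn+1\ne 0$ for all $m\ge 1$, $n\ge 2$ — positive for $n\ge 3$, and equal to $8m^2-10m+1$ at $n=2$, which has no integer roots. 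This buys you a case-free argument and makes transparent exactly why the paper cannot rely on the impossibility lemma alone: at $(m,n)=(1,2)$ the sufficient condition $\Delta>4(|S|-1)$ fails by one ($15$ versus $16$), and your computation shows the two $Q$-values there differ by exactly $-1$. The paper's approach, in exchange, isolates a reusable sufficient criterion (Lemma~\ref{lem:impossible}) that applies without evaluating $N_R,N_C$ at any specific vertex, at the cost of the residual case $m=1$. Both arguments are valid; yours is self-contained and handles all parameters uniformly.
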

\begin{proof}
Let $z=(i,g,j)$ be a nonzero target. Ordered pairs $(x,y)$ with $xy=z$ are exactly
\[
x=(i,g_1,k),\qquad y=(k,g_2,j),
\]
with $k\in I$ and $g_1g_2=g$. For each $k$ there are $m$ choices for $g_1$. Thus
 $N_S(z)=mn$.

Total ordered pairs in $S\times S$ is $(1+mn^2)^2$. Since pairs producing a nonzero product are those nonzero pairs with matching middle index, we have $m^2 n^3$ such pairs. Hence
\[
N_S(0)=(1+mn^2)^2-m^2 n^3.
\]
Set $\Delta=N_S(0)-N_S(z)$. A short expansion yields
\[
\Delta=1+2mn^2-mn+m^2 n^3(n-1).
\]
We distinguish cases based on the order $m$ of the structure group $G$.
\begin{itemize}
    \item \textbf{Case $m \ge 2$:} Since the term $m^2 n^3(n-1)$ is non-negative and dominant for $n > 1$, we verify $\Delta > 4mn^2$:
    \[
    1+2mn^2-mn+m^2 n^3(n-1) > 4mn^2 \iff m^2 n^3(n-1) > 2mn^2 + mn - 1.
    \]
    This inequality holds for all $n>1$. Thus $\Delta > 4(|S|-1)$, and by Lemma~\ref{lem:impossible}, $Q$ cannot be constant.
    \item \textbf{Case $m = 1$:} Here $S$ is the Brandt semigroup over the trivial group. 
    Since for any nonzero target $z$, $N_S(z)=n$, equations $s_i t = z$ and $t s_j = z$ have unique solutions. This implies that $N_R(v)=N_C(v)=0$. Hence $Q(v)=n$. 
    Conversely, for a vertex $w=(x,y,0)$ with $x,y \neq 0$, we have $N_S(0) = (1+n^2)^2 - n^3$. For $n>1$, this value grows like $n^4$, which is larger than $n$. So, $Q(w) = N_S(0) - 2n(n-1) \neq Q(v)$. Therefore $Q$ is not constant.
\end{itemize}
In all cases with $n>1$, $\Gamma(S)$ is not regular. Hence the proof is complete.
\end{proof}

\begin{remark}[Numeric illustrations]
Two small cases for $m \ge 2$:
\begin{itemize}
  \item $m=2,n=2$. Then $|S|=9$, $N_S(\text{nonzero})=4$, $N_S(0)=49$, so $\Delta=45$ and $4(|S|-1)=32$.
  \item $m=2,n=3$. Then $|S|=19$, $N_S(\text{nonzero})=6$, $N_S(0)=253$, so $\Delta=247$ and $4(|S|-1)=72$.
\end{itemize}
In both cases $\Delta>4(|S|-1)$, so regularity is impossible.
\end{remark}

\subsection{Constant-image semigroups}
A semigroup $S$ is called a \emph{constant-image semigroup} if there exists an element $c \in S$ such that $S^2=\{c\}$; ie; $xy=c$ for all $x,y \in S$. In this case, the vertex set of $\Gamma(S)$ consists entirely of triples $(s_i, s_j, c)$. Since for any such vertex $(s_i, s_j, c)$ the equations $s_i t = c$ and $t s_j = c$ are satisfied for every $t \in S$, so the counts of alternative factors are maximal.

\begin{proposition}
If $S$ satisfies $S^2=\{c\}$ then $\Gamma(S)$ is $(n-1)^2$-regular.
\end{proposition}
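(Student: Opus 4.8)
The plan is to compute the three counting invariants directly at an arbitrary vertex and feed them into the degree formula of Theorem~\ref{thm:degree}. First I would observe that since $S^2=\{c\}$, every ordered product equals $c$, so each pair $(s_i,s_j)$ produces the vertex $(s_i,s_j,c)$; hence $V(\Gamma(S))$ consists of exactly $n^2$ triples, all sharing the third coordinate $c$. In particular $N_S(c)$, the number of ordered pairs mapping to $c$, equals $n^2$.

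Next I would evaluate $N_R(v)$ and $N_C(v)$ at a fixed vertex $v=(s_i,s_j,c)$. Because $s_i t=c$ holds for \emph{every} $t\in S$, the set $\{t\in S\setminus\{s_j\}: s_i t=c\}$ is just $S\setminus\{s_j\}$, giving $N_R(v)=n-1$; by the symmetric argument $N_C(v)=n-1$. Substituting into $Q(v)=N_S(c)-2N_R(v)-2N_C(v)$ then gives
\[
Q(v)=n^2-2(n-1)-2(n-1)=n^2-4n+4=(n-2)^2,
\]
which is independent of $v$. Applying Theorem~\ref{thm:degree} yields
\[
\deg(v)=2n-3+(n-2)^2=n^2-2n+1=(n-1)^2
\]
for every vertex, so $\Gamma(S)$ is $(n-1)^2$-regular.

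Since the whole argument reduces to a single substitution, there is no genuine obstacle here; the only points demanding care are the off-by-one exclusion of $s_j$ (resp.\ $s_i$) in the definitions of $N_R$ and $N_C$, which drops each count from $n$ to $n-1$, and the final algebraic simplification. The degree formula of Theorem~\ref{thm:degree} absorbs both cleanly once the three invariants have been pinned down, so the core of the proof is simply establishing $N_S(c)=n^2$ and $N_R(v)=N_C(v)=n-1$ uniformly.
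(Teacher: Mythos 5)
Your proof is correct and follows essentially the same route as the paper's: compute $N_S(c)=n^2$ and $N_R(v)=N_C(v)=n-1$ at an arbitrary vertex, then substitute into the degree formula of Theorem~\ref{thm:degree} to obtain $\deg(v)=(n-1)^2$. The algebraic simplification and the handling of the exclusions of $s_j$ and $s_i$ match the paper exactly.
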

\begin{proof}
Since $xy=c$ for all $x,y \in S$, the total number of factorizations is $N_S(c) = n^2$.
For any vertex $v=(s_i, s_j, c)$, the condition $s_i t = c$ holds for all $t \in S$. Therefore, the number of alternative elements $t \in S \setminus \{s_j\}$ is exactly $n-1$, giving $N_R(v) = n-1$. Symmetrically, $N_C(v) = n-1$.
So, we substitute these values into the degree formula from Theorem~\ref{thm:degree}:
\begin{align*}
\deg(v) &= 2n-3 + [N_S(c) - 2N_R(v) - 2N_C(v)],\\
&= 2n-3 + [n^2 - 2(n-1) - 2(n-1)],\\
&= 2n-3 + (n^2 - 4n + 4) = (n-1)^2.
\end{align*}
Thus, the degree is constant for all vertices. Hence the proof is complete.
\end{proof}

\section{Null Semigroups: Connectivity and Spectrum}\label{sec:null}
Let $S$ be a null semigroup of order $n$ with zero $0$ and $s_i s_j=0$ for all $i,j$. Then
\[
V(\Gamma(S))=\{(s_i,s_j,0):1\le i,j\le n\}.
\]
\noindent Unlike the classical case for groups, where row-neighbors are adjacent due to sharing a single coordinate, the shared zero product here causes row-neighbors to share two coordinates. This distinction disconnects the grid structure and leads to the tensor product identification.

Recall that vertices are adjacent iff they agree in exactly one coordinate. 
Since any two vertices agree on the third coordinate $0$, so they are adjacent only if they \textbf{disagree} on the first two coordinates. 
Thus, the map $(s_i,s_j,0)\mapsto (i,j)$ is an isomorphism
\[
\Gamma(S)\cong K_n\times K_n.
\]
Eigenvalues of $K_n$ are $n-1$ (mult.~$1$) and $-1$ (mult.~$n-1$). Since we use Kronecker product, eigenvalues are products of eigenvalues of factors. Hence, the spectrum of $\Gamma(S)$ is
\[
(n-1)^2\ (1),\quad -(n-1)\ (2(n-1)),\quad 1\ ((n-1)^2).
\]
Therefore, the graph energy is
\[
E(\Gamma(S)) = (n-1)^2 + 2(n-1)(n-1) + (n-1)^2 = 4(n-1)^2.
\]

\subsection{Connectivity}

Since connectivity of $K_n\times K_n$ depends on $n$, $\Gamma(S)$ is connected for $n \ge 3$, while for $n=2$ it consists of two disjoint components.
\begin{figure}[H]
\centering
\begin{tikzpicture}[
    scale=0.95,
    node/.style={circle, draw=black!70, fill=gray!20, minimum size=2mm, inner sep=1pt},
    labelstyle/.style={font=\small}
]
\foreach \i in {1,2} {
    \foreach \j in {1,2} {
        \node[node] (v\i\j) at (\j*2.0, -\i*2.0) {};
        \node[labelstyle, above=0.12cm of v\i\j] {($s_{\i}, s_{\j}, 0$)};
    }
}
\draw[thick] (v11) -- (v22);
\draw[thick] (v12) -- (v21);

\node[font=\small] at (2, -4.6) {$\Gamma(S) \cong K_2 \times K_2$ (two disjoint edges)};
\end{tikzpicture}
\caption{Visualization of $\Gamma(S)$ for a null semigroup of order $n=2$.}
\label{fig:null_tensor_n2}
\end{figure}
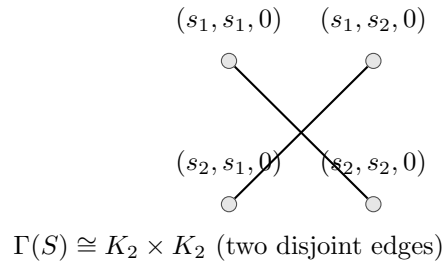

\section{Computational Verification and Examples}\label{sec:experiments}
We checked counting formulas and representative examples using \texttt{GAP} (version 4.14.0) for small orders (up to $n=6$). The checks served as tests for Theorem~\ref{thm:degree} and illustrated typical behaviours.

\paragraph{Verification strategy.}
For a multiplication table $M$, a naive degree calculation by enumerating all adjacencies requires $O(n^4)$ operations. Since computing $N_R$ and $N_C$ repeatedly is costly, we adopted an optimized $O(n^2)$ approach. The script \texttt{VerifyGammaFast.g} implements this algorithm and verifies the degree formula for any input semigroup.

\paragraph{Worked example (null semigroup, $n=3$).}
Let $S=\{s_1,s_2,s_3\}$ be a null semigroup with product $s_3$. The output of \texttt{VerifyGammaFast} confirms that $\Gamma(S)$ is regular. For a sample vertex $v=(s_1,s_2,3)$, the script computes
\[
N_S(3)=9,\qquad N_R(v)=N_C(v)=2,\qquad \deg(v)=4,
\]
in agreement with Theorem~\ref{thm:degree}. A console excerpt demonstrating the full degree check is available in Appendix~\ref{app:gap}.

\paragraph{Using the verification script.}
To verify the degree formula for any semigroup, one can simply read the multiplication table (e.g., generated by standard \texttt{GAP} constructors like \texttt{NullSemigroup} or \texttt{BrandtSemigroup}) into the variable \texttt{M}, load the verification script, and run the command \texttt{VerifyGammaFast(M)}. The script outputs a detailed breakdown of degrees and confirms whether $\Gamma(S)$ is regular.

\paragraph{Representative GAP commands.}
The following snippet demonstrates the workflow for the null semigroup example presented above:
\begin{verbatim}
M := [ [3,3,3], [3,3,3], [3,3,3] ];

# Run verification
Read("VerifyGammaFast.g");
res := VerifyGammaFast(M);
Print(res.Degrees);
\end{verbatim}

\section{Computational Analysis}\label{sec:prevalence}
In order to estimate the prevalence of regular generalized Latin square graphs, an exhaustive enumeration was performed for all semigroups of order $n \le 6$. The computations were conducted using the \texttt{smallsemi} \cite{Distler2024} and \texttt{semigroups} \cite{MitchellSemigroups} packages within the \texttt{GAP} system \cite{GAP2024}. The enumeration provides a sample size of over 17,000 semigroups. The data is consistent with known sequences of semigroup counts (cf. OEIS A001423 \cite{OEIS2024}).

\paragraph{Algorithm.} To determine whether $\Gamma(S)$ is regular, we implemented an optimized algorithm in \texttt{GAP}. The algorithm precomputes the counting invariants $N_S, N_R, N_C$ for each vertex and checks if the quantity $Q = N_S - 2N_R - 2N_C$ is constant. This script was used to verify the regularity condition on the entire dataset of small semigroups.

\begin{table}[ht]
\centering
\caption{Prevalence of regular Generalized Latin Square Graphs (GLSGs) for semigroups of small order.}
\label{tab:prevalence}
\begin{tabular}{cccc}
\toprule
\textbf{Order $n$} & \textbf{Total Semigroups} & \textbf{Regular GLSGs} & \textbf{Percentage} \\ 
\midrule
1 & 1 & 1 & 100.0 \\ 
2 & 4 & 3 & 75.0 \\ 
3 & 18 & 3 & 16.7 \\ 
4 & 126 & 8 & 6.3 \\ 
5 & 1,160 & 3 & 0.26 \\ 
6 & 15,973 & 12 & 0.08 \\ 
\midrule
\textbf{Total} & \textbf{17,282} & \textbf{30} & \textbf{0.17} \\ 
\bottomrule
\end{tabular}
\end{table}

\paragraph{Algebraic Obstructions to Regularity.}
The significant decrease in frequency observed in Table~\ref{tab:prevalence} reflects a fundamental algebraic obstruction. Since the regularity of $\Gamma(S)$ is equivalent to the condition that $Q(i, j)$ remains constant, so we must analyze this condition. Each of the three counting functions involved depends on distinct structural features of the multiplication table. For a generic semigroup, as $n$ increases, the distribution of factorization multiplicities broadens. The requirement that these distinct variations compensate one another imposes a system of $n^2$ simultaneous constraints on the multiplication table. Consequently, the probability that a randomly selected semigroup satisfies these constraints approaches zero as $n$ increases.

\section{Conclusion and Future Work}\label{sec:conclusion}
We presented a canonical graph $\Gamma(S)$ for finite semigroups and a compact counting framework. 
The degree formula links algebraic factorization counts to graph regularity. 
Applications include bands, Brandt semigroups, and null semigroups. 
This paper establishes a bridge between factorization complexity and graph regularity, and shows that observed irregularities in $\Gamma(S)$ can be largely explained by variations in the local factorization counts $N_R$ and $N_C$. 

\begin{remark}[Block structure and spectrum]
If $S$ decomposes as a semilattice of subsemigroups (e.g., \textbf{Clifford semigroups}, which are precisely semilattices of disjoint groups), then $\Gamma(S)$ inherits a block structure. In particular, for certain inverse semigroups arising as semilattices of groups, this yields a practical route to compute or bound eigenvalues.
\end{remark}

\textbf{Future work :} Systematic spectral study for inverse semigroups and tighter bounds on $Q$ via structural properties are interesting topics for research.

\appendix
\section{Computational materials and representative outputs} 
\subsection{Optimized Verification Script}\label{app:gap}
The following \texttt{GAP} script, \texttt{VerifyGammaFast.g}, implements the $O(n^2)$ degree verification algorithm described in Section~\ref{sec:experiments}. It defines a function that accepts any multiplication table $M$ (represented as a list of lists) and returns the degree list.

\begin{lstlisting}[style=GAPstyle, caption={\texttt{VerifyGammaFast.g}: Optimized verification driver.}]
# VerifyGammaFast.g - optimized verification driver (GAP, 1-based)
# Usage:
#   Read("VerifyGammaFast.g");
#   M := [[1,2,3], ...];  # User defines their table
#   res := VerifyGammaFast(M);

LoadPackage("semigroups");
VerifyGammaFast := function(tbl)
    local n, i, j, k,
          N_S, NR_Table, NC_Table,
          deg_list, Q_list,
          NR_total, NC_total, NR, NC, Q, deg;

    # n: number of elements (assume tbl is n x n and entries in 1..n)
    n := Length(tbl);

    # Initialize N_S and NR/NC tables (1-based indexing)
    N_S := List([1..n], _ -> 0);
    NR_Table := List([1..n], _ -> List([1..n], _ -> 0)); # NR_Table[i][k]
    NC_Table := List([1..n], _ -> List([1..n], _ -> 0)); # NC_Table[j][k]

    # Single pass: fill N_S, NR_Table and NC_Table in O(n^2)
    for i in [1..n] do
        for j in [1..n] do
            k := tbl[i][j];            # k in 1..n
            N_S[k] := N_S[k] + 1;
            NR_Table[i][k] := NR_Table[i][k] + 1;
            NC_Table[j][k] := NC_Table[j][k] + 1;
        od;
    od;

    # Compute degrees using precomputed tables
    deg_list := [];
    Q_list := [];

    for i in [1..n] do
        for j in [1..n] do
            k := tbl[i][j];  # third coordinate (1..n)

            # totals including current (i,j) entry
            NR_total := NR_Table[i][k];
            NC_total := NC_Table[j][k];

            # alternatives exclude current factor (exclude s_j from NR, s_i from NC)
            NR := NR_total - 1;   # number of t != s_j with i*t = k
            NC := NC_total - 1;   # number of t != s_i with t*j = k

            # compute Q and degree (note N_S[k] already 1-based)
            Q := N_S[k] - 2*NR - 2*NC;
            deg := 2*n - 3 + Q;

            Add(Q_list, Q);
            Add(deg_list, deg);
        od;
    od;

    # Print summary
    if ForAll(deg_list, d -> d = deg_list[1]) then
        Print("\nResult: Gamma(S) is REGULAR. Degree = ", deg_list[1], "\n");
    else
        Print("\nResult: Gamma(S) is NOT REGULAR.\n");
        Print("Degree set: ", Set(deg_list), "\n");
    fi;

    return rec(N_S := N_S, NR_Table := NR_Table, NC_Table := NC_Table,
               Q_list := Q_list, Degrees := deg_list);
end;;

# ==========================================
# Example Usage (User can change M to any multiplication table)
# Example: Null semigroup of order 3 (all products map to element 3)
# Note: Index 3 corresponds to the zero element.
M := [[3,3,3], [3,3,3], [3,3,3]]; 
res := VerifyGammaFast(M);

# Printing results for inspection
Print("N_S table: ", res.N_S);
Print("Degrees: ", res.Degrees);
\end{lstlisting}
\section{Computational materials and representative outputs}\label{app:gap_analysis}

\subsection{Regular GLSG Analysis Script}
To ensure the reproducibility of the prevalence data in Section~\ref{sec:prevalence}, we provide the \texttt{GAP} driver \texttt{RunGLSGAnalysis}. The script iterates over all semigroups up to a specified order $n$ and checks the regularity of their generalized Latin square graphs using the precomputed counting invariants. A concise description of the algorithm is given below:

\begin{lstlisting}[style=GAPstyle, caption={\texttt{RunGLSGAnalysis.g}: Optimized verification driver.}]
# ---------------------------------------------------------
# GAP: Optimized Regularity Analysis of Small Semigroups
# ---------------------------------------------------------

LoadPackage("semigroups");
LoadPackage("smallsemi");

# Main Driver Function
RunGLSGAnalysis := function(maxOrder)
    local n, sems, tot, reg, sem, perc, t0, t1;

    for n in [1..maxOrder] do
        t0 := Runtime();
        
        # Enumerate all semigroups of order n
        sems := AllSmallSemigroups(n);
        tot := Size(sems);
        reg := 0;

        # Check regularity for each semigroup
        for sem in sems do
            if IsRegularGLSG(sem) then
                reg := reg + 1;
            fi;
        od;

        perc := Float(reg) / tot * 100.0;
        t1 := Runtime();

        # Print row for Table 3
        Print("Order ", n, ": ", reg, "/", tot, " regular (",
              String(perc), "%)  [", String(t1-t0)/1000.0, " s]\n");
    od;
end;

# ---------------------------------------------------------
# Helper: IsRegularGLSG (Optimized Implementation)
# Checks if Q is constant for a given semigroup S
# ---------------------------------------------------------
IsRegularGLSG := function(S)
    local elts, n, mult, i, j, k, p, q, NS, NR, NC, Qvals, NS_table;

    elts := Elements(S);
    n := Size(elts);
    mult := List(elts, x -> List(elts, y -> Position(elts, x * y)));

    # Precompute N_S for all elements
    NS_table := List([1..n], k -> 0);
    for p in [1..n] do
        for q in [1..n] do
            k := mult[p][q];
            NS_table[k] := NS_table[k] + 1;
        od;
    od;

    Qvals := [];

    for i in [1..n] do
        for j in [1..n] do
            k := mult[i][j];

            # Count N_R and N_C
            NR := Size(Filtered([1..n], q -> q <> j and mult[i][q] = k));
            NC := Size(Filtered([1..n], p -> p <> i and mult[p][j] = k));

            # Compute Q
            Q := NS_table[k] - 2 * (NR + NC);
            Add(Qvals, Q);
        od;
    od;

    # Regular if all Q values are equal
    return Size(Set(Qvals)) = 1;
end;

# ---------------------------------------------------------
# Execute Analysis
# Note: Computing order 6 may take significant time and RAM.
# ---------------------------------------------------------
RunGLSGAnalysis(6);
\end{lstlisting}

\end{document}